\newcommand{\f}[2]{\frac{#1}{#2}}
\newcommand{\abs}[1]{\left\lvert{#1}\right\rvert}
\newcommand{\pq}[1]{\left({#1}\right)}
\newcommand{\C}{\mathbb{C}}
\newcommand{\R}{\mathbb{R}}
\DeclareMathOperator{\im}{im}
\DeclareMathOperator{\End}{End}
\newtheorem{theorem}{Theorem}
\newtheorem{lemma}[theorem]{Lemma}
\newtheorem{proposition}[theorem]{Proposition}
\newtheorem{conjecture}[theorem]{Conjecture}
\theoremstyle{definition}
\numberwithin{theorem}{section}
\title{Limit Points of Commuting Probabilities of Finite Groups}
\author{Thomas Browning}
\date{August 2022}
\begin{document}
	\maketitle
	\begin{abstract}
		The commuting probability of a finite group $G$ is the probability that two randomly chosen elements commute.
		Let $S\subseteq(0,1]$ denote the set of all possible commuting probabilities of finite groups.
		We prove that $S\cup\{0\}$ is closed, which was conjectured by Keith Joseph in 1977.
	\end{abstract}
	\section{Introduction}
	For a finite group $G$, the commuting probability of $G$ is defined as
	\[P(G)=\f{\abs{\{(g,h)\in G\times G:gh=hg\}}}{\abs{G}^2}.\]
	The commuting probability of $G$ also has the formula $P(G)=c(G)/\abs{G}$, where $c(G)$ denotes the number of conjugacy classes of $G$.
	For example, the commuting probability of the dihedral group of order 8 is $P(D_4)=5/8$.
	In fact, $5/8$ is the largest possible commuting probability of a nonabelian group \cite{gustafson1973probability}.
	Keith Joseph studied the set of all possible commuting probabilities
	\[S=\{P(G):G\text{ a finite group}\}\subseteq(0,1],\]
	and observed that the intersection
	\[S\cap\left[\frac{7}{16},1\right]=\left\{\frac{7}{16},\frac{1}{2},\ldots,\frac{1}{2}(1+2^{-2n}),\ldots,\frac{17}{32},\frac{5}{8},1\right\}\]
	seemed to be illustrative of the general behavior of $S$ \cite{joseph1977several} \cite{joseph1970commutativity}.
	Notice that the elements of $S$ approach $\frac{1}{2}$ from above, but not from below, and that the $S$ contains the limit point $\frac{1}{2}$.
	This led Joseph to make the following three conjectures \cite{joseph1970commutativity}.
	\begin{conjecture}[Joseph's Conjectures]
		Let $(x_i)_{i=1}^\infty$ be a sequence of elements of $S$ converging to $\ell>0$.
		Then \emph{(1)} $\ell\in\mathbb{Q}$, \emph{(2)} $x_i\geq\ell$ for all but finitely many $i$, and \emph{(3)} $\ell\in S$ \emph{(}which implies $\ell\in\mathbb{Q}$ since $S\subseteq\mathbb{Q}$\emph{)}.
	\end{conjecture}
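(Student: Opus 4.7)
The plan is to prove the third (and strongest) conjecture $\ell\in S$. This implies $(1)$ since $S\subseteq\mathbb{Q}$, and $(2)$ follows either as an intermediate step in the argument or as a companion statement that $S$ is well-ordered in reverse. The key input is Neumann's classical theorem: if a finite group $G$ satisfies $P(G)\geq c>0$, then $\abs{[G,G]}$ and $[G:Z(G)]$ are both bounded by a function of $c$ alone. Applied to a sequence $(G_i)$ with $P(G_i)\to\ell$, this yields uniform structural bounds, and after passing to a subsequence we may assume the isomorphism types of $K_i:=[G_i,G_i]$ and $Q_i:=G_i/Z(G_i)$ are constant, say $K$ and $Q$ respectively.

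Refining the subsequence further, one can also assume that the induced commutator pairing $Q\times Q\to K$ is the same for all $i$. Each $G_i$ then fits into a central extension $1\to Z(G_i)\to G_i\to Q\to 1$ where the commutator data is fixed, and the only variation lies in the center $Z(G_i)$ (a finitely generated abelian group of bounded rank) together with the extension cocycle. The commuting probability $P(G_i)=c(G_i)/\abs{G_i}$ can be written as a rational expression in this data via orbit counting on conjugacy classes: since $K$ is finite and $Q$ acts on $Z(G_i)$, the free part of $Z(G_i)$ cancels from the formula, leaving a dependence only on certain finite torsion quotients of $Z(G_i)$ together with the cocycle class.

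The last step is to construct a genuinely finite group $G^*$ with $P(G^*)=\ell$ by choosing the common data $(Q,K,\text{pairing})$ together with an appropriate finite abelian center $Z^*$ and cocycle, so that the formula from the previous paragraph evaluates to $\ell$ on the nose. The main obstacle, and presumably the technical heart of the paper, is to show that $\ell$ lies in the \emph{image} of this formula rather than merely in its closure: this amounts to an arithmetic/combinatorial statement about which rationals occur as commuting probabilities of central extensions of $Q$ by finite abelian groups with prescribed commutator pairing. I expect this to require a delicate realization argument, perhaps using the Chinese remainder theorem to assemble the torsion of $Z^*$ prime-by-prime so as to match the numerator and denominator of $\ell$ exactly. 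Once such a $G^*$ is produced, the equality $P(G^*)=\ell$ establishes $\ell\in S$ and completes the proof of the conjecture.
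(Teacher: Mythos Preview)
Your proposal rests on a version of Neumann's theorem that is false. You claim that $P(G)\geq c>0$ forces both $\abs{[G,G]}$ and $[G:Z(G)]$ to be bounded in terms of $c$. Neither bound holds. Take $G_n=(\Z/3)^n\rtimes\Z/2$ with the nontrivial element acting by inversion. One computes $c(G_n)=(3^n+3)/2$, so $P(G_n)=(3^n+3)/(4\cdot3^n)\to\tfrac14$, yet $[G_n,G_n]=(\Z/3)^n$ and $Z(G_n)=1$, so both $\abs{[G_n,G_n]}$ and $[G_n:Z(G_n)]$ are unbounded. The correct statement (Theorem~\ref{thm:neumann} in the paper, following Eberhard's formulation of Neumann) only produces a normal subgroup $K_i\trianglelefteq G_i$ with $K_i'\leq Z(K_i)$ such that $\abs{K_i'}$ and $[G_i:K_i]$ are bounded; the groups $K_i$ themselves, and in particular $\abs{K_i/Z(K_i)}$, are typically unbounded. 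Your remark about the ``free part of $Z(G_i)$'' also has no meaning here, since every $G_i$ is finite.

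Because of this, the picture you set up---a fixed finite $Q=G_i/Z(G_i)$ with a fixed commutator pairing, varying only the finite abelian center---does not describe the sequence, and the ``realization argument'' you sketch has nothing to work with. The paper's route is genuinely different. After Neumann's theorem one has the class-two groups $K_i$ with \emph{bounded} $\abs{K_i'}$ but \emph{unbounded} $\abs{K_i}$, and the heart of the argument is an equidistribution lemma (Lemma~\ref{lem:equidistribution}) proved by Fourier analysis on the fixed finite abelian group $K'\cong K_i'$: the maps $(k,l)\mapsto\varphi(k)[k,l]\psi(l)$ become equidistributed on a certain subgroup $H_0\leq K'$ as $i\to\infty$. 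This, together with a reduction making the $G_i$-action on $K_i/K_i'$ trivial, shows that along a subsequence either all $P(G_i)$ coincide, or $P(G_i)\to\tfrac{1}{k}P(H)$ from above for some finite $H$ and some integer $k\geq2$. The limit is then placed in $S$ not by building a bespoke central extension, but simply by invoking the known fact that $\tfrac{1}{k}\in S$ for every $k$ and that $S$ is closed under products.
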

	Equivalently, Joseph's second conjecture states that $S$ is well-ordered with respect to the opposite ordering, and Joseph's third conjecture states that $S\cup\{0\}$ is closed.
	
	Rusin proved that Joseph's conjectures hold for sequences converging to $\ell>\frac{11}{32}$ by classifying all finite groups $G$ with $P(G)>\frac{11}{32}$ \cite{rusin1979probability}.\footnote{Some minor errors were corrected by \cite{das2011characterisation}.}
	However, Rusin's approach cannot give any information about $S$ on the interval $(0,\frac{1}{4}]$ since it relies on the estimate $P(G)\leq\frac{1}{4}+\frac{3}{4}\frac{1}{\abs{G'}}$.
	Rusin proves this estimate by considering the number of irreducible characters of degree 1.
	By also considering irreducible characters of degree 2, Hegarty proved that Joseph's first two conjectures hold for sequences converging to $\ell>\frac{2}{9}$, but did not say anything about Joseph's third conjecture \cite{hegarty2013limit}.
	Hegarty's work also revealed a connection between commuting probability and Egyptian fractions.
	Eberhard developed this connection and proved Joseph's first two conjectures \cite{eberhard2015commuting}.
	Eberhard made use of a theorem of Peter Neumann that describes the structure of finite groups $G$ with $P(G)$ bounded away from zero \cite{neumann1989two}.
	In this paper, we will use the theorem of Neumann to prove Joseph's third conjecture.
	We now state our main theorem, which implies all three of Joseph's conjectures.
	\begin{theorem}
	    \label{thm:main}
	    Let $(G_i)_{i=1}^\infty$ be a sequence of finite groups whose commuting probabilities are bounded away from zero.
        Then there exists a finite group $H$ and a subsequence $(G_{n_i})_{i=1}^\infty$ whose commuting probabilities satisfy $P(G_{n_i})\to P(H)$ and $P(G_{n_i})\geq P(H)$.
	\end{theorem}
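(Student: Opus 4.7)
The plan is to apply Neumann's structure theorem to reduce to a setting where the ``bounded'' algebraic data of the $G_i$ is stable, then stabilize the unbounded data by passing to subsequences, and finally exhibit a group $H$ realizing the limit. First, I would apply Neumann's theorem to each $G_i$: there exist normal subgroups $N_i \trianglelefteq G_i$ with both $[G_i:N_i]$ and $|N_i'|$ bounded in terms of the lower bound on $P(G_i)$. By passing to a subsequence I may take these invariants constant, and fix the isomorphism types of $Q := G_i/N_i$, $T := N_i'$, and the conjugation action of $Q$ on $T$. The only variable data is then the finite abelian $Q$-module $A_i := N_i/N_i'$ together with some extension classes. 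Using primary decomposition of $A_i$ and Maschke-type semisimplicity for primes coprime to $|Q|$, I would refine the subsequence so that $A_i \cong B \oplus C_i$ with $B$ a fixed $Q$-module and $C_i$ growing in a controlled way---for instance, as a direct sum of an increasing number of copies of a finite list of generic $Q$-modules---and similarly stabilize the extension data modulo $C_i$.

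Next, I would analyze $P(G_i) = k(G_i)/|G_i|$ via the fiberwise decomposition of conjugacy classes of $G_i$ over $Q$: each class $[q] \in Q$ contributes, up to bounded corrections from $T$, the number of $C_Q(q)$-orbits on the twisted-conjugation quotient $A_i/(1-q)A_i$. Under the uniform structure of $C_i$, each fiber contribution has a predictable asymptotic, and $P(G_i)$ converges to a specific rational limit determined entirely by the stabilized data. I would construct $H$ as a finite group whose fiberwise counts realize this limit exactly, and verify $P(G_i) \geq P(H)$ by a direct fiberwise comparison.

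The main obstacle is the construction of $H$. The example $P(D_n) \to 1/4 = P(F_{20})$ for odd $n \to \infty$ shows that $H$ need not arise as a subquotient of the $G_i$, so it must be built from the stabilized structural data rather than from the $G_i$ themselves, possibly leveraging Eberhard's Egyptian-fraction analysis of $P$-limits. The inequality $P(G_i) \geq P(H)$ is the quantitative heart of the theorem, and requires a delicate monotonicity argument showing that all correction terms in the fiberwise sum have a definite sign as the multiplicity in $C_i$ grows.
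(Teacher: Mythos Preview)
Your outline has the right opening move---Neumann plus stabilizing the finite data $Q=G_i/N_i$ and $T=N_i'$---but the two steps you flag as hard are genuinely unsolved in your plan, and the paper's route bypasses both. First, the decomposition $A_i\cong B\oplus C_i$ with $C_i$ a growing sum of copies from a fixed finite list is not available in general: for primes $p$ dividing $\abs{Q}$ the $\Z_p[Q]$-modules are not semisimple, and even as abelian groups the $A_i$ need not split off a common summand. Second, you correctly identify the construction of $H$ as the crux and then defer it; your proposed fiberwise construction (``a finite group whose fiberwise counts realize this limit exactly'') is exactly the step that has no obvious candidate, as your $D_n$ example shows.

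The paper avoids both issues by never analyzing the structure of $A_i$ at all. Instead it proves an equidistribution lemma via Fourier analysis on the fixed finite group $K'$: after reducing to the case where $G_i$ acts trivially on $K_i/K_i'$, the commutator values $(k,l)\mapsto[k,l]$ (twisted by two homomorphisms) equidistribute over $K'$, which forces each coset-pair summand in the expansion of $P(G_i)$ to converge to either $0$ or $1/\abs{K'}$. The upshot is $P(G_i)\to\frac{1}{\abs{K'}}P(G_i/K_i')$ with $P(G_i/K_i')$ eventually constant. The group $H$ is then $G_1/K_1'$ times any group of commuting probability $1/\abs{K'}$ (such groups exist for every reciprocal integer), so no bespoke construction is needed. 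Finally, the inequality you call ``the quantitative heart'' is in fact a one-liner: $P(G)\geq P(G/N)/\abs{N}$ follows immediately from $c(G)\geq c(G/N)$.
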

	In terms of the set $S$, Theorem \ref{thm:main} states that if $(x_i)_{i=1}^\infty$ is a sequence of elements of $S$ bounded away from zero, then there exists an element $\ell\in S$ and a subsequence $(x_{n_i})_{i=1}^\infty$ satisfying $x_{n_i}\to\ell$ and $x_{n_i}\geq\ell$.
	
	We will prove Theorem \ref{thm:main} in Section 4.
	After reducing to a special case, the proof concludes by applying Lemma \ref{lem:equidistribution} from Section 3, which is an equidistribution result for commutators.
	
	Joseph's second conjecture implies that $S$ is well-ordered with respect to the opposite ordering.
	Assuming this conjecture, Joseph also asked for the order type of $S$.
	Eberhard's proof of Joseph's first two conjectures narrowed down the possibilities for the order type of $S$ to either $\omega^\omega$ or $\omega^{\omega^2}$ \cite{eberhard2015commuting}.
	From the proof of Theorem \ref{thm:main}, we are able to determine the order type of $S$.
	\begin{theorem}
	    \label{thm:submain}
	    The order type of $S$ \emph{(}with respect to the opposite ordering\emph{)} is $\omega^\omega$.
	\end{theorem}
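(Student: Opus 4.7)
My plan is to establish the upper bound: the reverse order type of $S$ is at most $\omega^\omega$. Combined with Eberhard's dichotomy \cite{eberhard2015commuting} that this order type belongs to $\{\omega^\omega,\omega^{\omega^2}\}$, the upper bound forces it to equal $\omega^\omega$ exactly. Since $S$ is reverse well-ordered, every topological limit point of $S$ in $\mathbb{R}$ must be a limit from above, so the topological and order-theoretic derived sets of $S$ coincide. The target bound is therefore equivalent to the assertion that for every $\epsilon>0$, the Cantor--Bendixson rank of $S\cap[\epsilon,1]$ in $\mathbb{R}$ is finite.

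To prove this finiteness I would extract a quantitative strengthening of Theorem \ref{thm:main}. Specifically, I would try to define a nonnegative integer complexity $c(x)$ on $x\in S$, built from Neumann-type structural data such as $\min\{[G:N]\cdot|N'| : N\trianglelefteq G,\ P(G)=x\}$, satisfying two properties: \emph{(i)} $c(x)\leq C(\epsilon)$ whenever $x\in S\cap[\epsilon,1]$, essentially immediate from Neumann's theorem; and \emph{(ii)} whenever a sequence $(x_i)$ in $S\cap[\epsilon,1]$ converges from above to $\ell\in S$, the limit group $H$ with $P(H)=\ell$ produced by the proof of Theorem \ref{thm:main} satisfies $c(\ell)<c(x_i)$ for all sufficiently large $i$. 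Property \emph{(ii)} implies that each Cantor--Bendixson derivation on $S\cap[\epsilon,1]$ strictly lowers the supremum of $c$, so the derivations must terminate within $C(\epsilon)+1$ steps, yielding the required finite Cantor--Bendixson rank.

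The main obstacle I foresee is pinning down an invariant $c$ for which the strict decrease in \emph{(ii)} genuinely holds: the existence of $H$ guaranteed by Theorem \ref{thm:main} does not by itself force $H$ to be numerically simpler than the $G_i$. Extracting this strict decrease will likely require refining Lemma \ref{lem:equidistribution} so as to pinpoint the structural feature of the $G_i$ responsible for the excess $P(G_i)-P(H)$; that feature must be absent (or strictly smaller) in $H$, and the right $c$ should register its absence. Once such a $c$ is in hand, the Cantor--Bendixson argument above closes out Theorem \ref{thm:submain}.
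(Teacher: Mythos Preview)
Your high-level strategy---bound the Cantor--Bendixson rank of $S\cap[\epsilon,1]$ and then invoke Eberhard's dichotomy---matches the paper's. The divergence is in how that bound is obtained, and this is where your proposal has a genuine gap.

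You look for a group-theoretic complexity $c(x)$, built from Neumann data such as $\min\{[G:N]\cdot|N'|\}$, that strictly drops when passing to a limit point. As you yourself flag, nothing in Theorem~\ref{thm:main} forces the limit group $H$ to be structurally simpler than the $G_i$; in fact in the paper's deduction of Theorem~\ref{thm:main} the group realizing $\ell$ is manufactured as a direct product with an auxiliary factor of dihedral groups, which can only raise invariants of the kind you propose. The refinement of Lemma~\ref{lem:equidistribution} you gesture toward is left entirely unspecified, so at present the proposal is a framework without its key input.

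The paper supplies that input not through a structural invariant of a representing group, but by sharpening Theorem~\ref{thm:main} to Theorem~\ref{thm:key}: after passing to a subsequence, either the $P(G_{n_i})$ are all equal, or the limit has the form $\frac{1}{k}P(H)$ for some integer $k\ge 2$. Thus every nonzero limit point of $X=S\cup\{0\}$ lies in $\bigcup_{k\ge 2}\frac{1}{k}X$. Using that $X$ is closed under multiplication and contains every $\frac{1}{n}$ (Proposition~\ref{prop:comm_prob_props}), one obtains $X'=\bigcup_p\frac{1}{p}X$ and, inductively, $X^n=\bigcup_{\Omega(k)=n}\frac{1}{k}X\subseteq[0,2^{-n}]$, whence $X^\omega=\{0\}$ and the order type is $\omega^\omega$.

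So the quantity that strictly decreases under derivation is not a complexity of a representing group but simply the value $x$ itself: Theorem~\ref{thm:key} guarantees that each nonzero limit point picks up an extra prime factor $\frac{1}{p}$, hence shrinks by at least a factor of $2$. That multiplicative $\frac{1}{k}$ is the concrete replacement for your hoped-for $c$, and isolating it inside the proof of Theorem~\ref{thm:main} is exactly the work done in Section~4.
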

	\section{Properties of the Commuting Probability}
	We will need the following properties of the commuting probability.
	\begin{proposition}\label{prop:comm_prob_props}
	The commuting probability satisfies the following properties:
	    \begin{enumerate}
	        \item If $G$ and $H$ are finite groups, then $P(G\times H)=P(G)P(H)$.
	        \item If $N$ is a normal subgroup of a finite group $G$, then $P(G)\geq P(G/N)/\abs{N}$.
	        \item For each positive integer $n$, there exists a finite group $G_n$ with $P(G_n)=\frac{1}{n}$.
	    \end{enumerate}
	\end{proposition}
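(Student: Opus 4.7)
Parts (1) and (2) are routine structural facts. For (1), I would count commuting pairs in $G\times H$ directly: the pair $((g_1,h_1),(g_2,h_2))$ commutes iff $g_1g_2=g_2g_1$ in $G$ and $h_1h_2=h_2h_1$ in $H$, so the set of commuting pairs in $(G\times H)^2$ is the Cartesian product of the corresponding sets in $G^2$ and $H^2$, and dividing by $|G|^2|H|^2$ gives $P(G\times H)=P(G)P(H)$. For (2), I would use the alternative formula $P(G)=c(G)/|G|$ and observe that the quotient map $\pi\colon G\to G/N$ descends to a well-defined map on conjugacy-class sets (since conjugate elements in $G$ have conjugate images), which is surjective because $\pi$ itself is surjective. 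Hence $c(G)\geq c(G/N)$, and combining with $|G|=|N|\cdot|G/N|$ gives
\[
P(G)=\frac{c(G)}{|G|}\geq \frac{c(G/N)}{|N|\cdot|G/N|}=\frac{P(G/N)}{|N|}.
\]

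For (3), I would exhibit explicit examples. A convenient family is Frobenius groups: when $n+1=q$ is a prime power, the group $G=\F_q\rtimes C_n$ with $C_n\leq\F_q^*$ a cyclic subgroup of order $n$ has $|G|=nq$, and a direct conjugacy-class count (one class for $0$, $(q-1)/n$ classes from the $C_n$-orbits on $\F_q\setminus\{0\}$, and $n-1$ classes of size $q$ indexed by the nontrivial elements of $C_n$) yields $c(G)=n+(q-1)/n=q$, so $P(G)=1/n$. For $n$ with $n+1$ not a prime power I would combine such Frobenius groups with dihedral examples (using $P(D_m)=(m+3)/(4m)$ for odd $m$) via part (1); for instance $1/5=\tfrac{1}{2}\cdot\tfrac{2}{5}=P(S_3\times D_5)$ and $1/11=\tfrac{7}{22}\cdot\tfrac{2}{7}=P(D_{11}\times D_{21})$.

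The main obstacle in (3) is showing that \emph{every} positive integer $n$ arises from some such combination; although Frobenius and dihedral families give a rich supply, a uniform argument covering all $n$ is not obvious from these alone. This is a classical result (going back to Joseph), so I would either adapt his explicit construction or simply cite it, rather than reprove it from scratch.
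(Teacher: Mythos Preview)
Your treatment of (1) and (2) is essentially the paper's: both reduce to the elementary conjugacy-class facts $c(G\times H)=c(G)\,c(H)$ and $c(G)\geq c(G/N)$ (you phrase (1) as a direct count of commuting pairs, which amounts to the same thing). For (3) the paper likewise simply cites a reference---Castelaz's thesis, where each $G_n$ is realized as a product $D_{m_1}\times\cdots\times D_{m_k}$ of dihedral groups with odd $m_j$---so your plan to cite rather than reprove matches the paper; your Frobenius-group examples are a pleasant extra, and the correct attribution for the full result is Castelaz rather than Joseph.
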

	\begin{proof}
	    The first property is basic and appears in virtually every prior paper on the commuting probability.
	    A specific reference is Lemma 2(v) in \cite{guralnick2005commuting}, where it is deduced from the corresponding formula for conjugacy classes $c(G\times H)=c(G)c(H)$.
	    We have not seen the second property written down before, but it is also elementary and follows from the corresponding inequality $c(G)\geq c(G/N)$.
	    The third property appears as Corollary 5.3.3 in \cite{castelaz2010commutativity}, where each $G_n$ is constructed as a specific finite group of the form $D_{m_1}\times\cdots\times D_{m_k}$ for odd integers $m_1,\ldots,m_k$.
	    Here $D_m$ denotes the dihedral group of order $2m$ with $P(D_m)=\frac{m+3}{4m}$.
	\end{proof}
	We will also need a statement of Neumann's theorem.
	\begin{theorem}[Theorem 2.4 in \cite{eberhard2015commuting}]
		\label{thm:neumann}
		Let $(G_i)_{i=1}^\infty$ be a sequence of finite groups whose commuting probabilities are bounded away from zero.
		Then there exist normal subgroups $K_i\trianglelefteq G_i$ with $K_i'\leq Z(K_i)$ such that the sequences $(\abs{K_i'})_{i=1}^\infty$ and $([G_i:K_i])_{i=1}^\infty$ are bounded.
	\end{theorem}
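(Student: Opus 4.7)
The plan is to convert $P(G_i)\geq\epsilon$ into structural information in three stages: Stage 1 extracts many elements of small conjugacy class, Stage 2 reduces to the BFC situation on a bounded-index subgroup, and Stage 3 applies the B.H.\ Neumann--Wiegold BFC theorem together with a Schur-type trick.

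\textbf{Stage 1 (many small classes).} Writing $P(G_i)=|G_i|^{-2}\sum_{g\in G_i}|C_{G_i}(g)|\geq\epsilon$, Markov's inequality applied to $|C_{G_i}(g)|/|G_i|$ produces a conjugation-invariant subset $Y_i\subseteq G_i$ with $|Y_i|\geq(\epsilon/2)|G_i|$ and $[G_i:C_{G_i}(y)]\leq 2/\epsilon$ for every $y\in Y_i$. Then $L_i:=\langle Y_i\rangle$ is characteristic in $G_i$ with $[G_i:L_i]\leq 2/\epsilon$, giving a bounded-index subgroup generated by small-class elements.

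\textbf{Stage 2 (reduction to BFC).} The goal is to locate a characteristic subgroup $N_i\leq L_i$, still of $\epsilon$-bounded index in $G_i$, such that every element of $N_i$ has $N_i$-conjugacy class bounded by some $M'=M'(\epsilon)$. My approach is iterated centralization: starting with $L_i^{(0)}:=L_i$, as long as some $x\in L_i^{(j)}$ has $[L_i^{(j)}:C_{L_i^{(j)}}(x)]>M'$, replace $L_i^{(j)}$ by $C_{L_i^{(j)}}(x)$. Bounding the chain length uniformly in $\epsilon$ is the main obstacle; I would try to do this by tracking the surviving mass of $Y_i\cap L_i^{(j)}$ and reusing the averaging of Stage 1 to force termination. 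Making the construction canonical (so that $N_i$ is characteristic, not merely of bounded index) can be arranged by symmetrizing the choices at each stage.

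\textbf{Stage 3 (class-two upgrade).} Given $N_i$, the B.H.\ Neumann BFC theorem---quantified by Wiegold---yields $|N_i'|\leq F(\epsilon)$. Setting $K_i:=C_{N_i}(N_i')$ then completes the argument: $K_i$ is characteristic in $N_i$ and hence normal in $G_i$, its index $[N_i:K_i]\leq|\aut(N_i')|\leq|N_i'|!$ is bounded in $\epsilon$, and because $K_i'\leq N_i'$ while $K_i$ centralizes $N_i'$ by construction, we get $K_i'\leq Z(K_i)$. The non-trivial content lies in Stage 2: having \emph{many} elements of small class is strictly weaker than having \emph{every} element of small class, since a product of small-class elements can have class size blowing up multiplicatively, and overcoming this gap is where Neumann's original combinatorial argument does the essential work.
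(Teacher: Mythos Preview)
The paper does not give its own proof of this statement; it is quoted as a black box from \cite{eberhard2015commuting} (and ultimately from Neumann \cite{neumann1989two}) and then used as input to the argument in Section~4. So there is no in-paper proof to compare your proposal against.

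On its own merits, your proposal has a genuine gap at Stage~2, and you flag it yourself. The iterated-centralization scheme---replace $L_i^{(j)}$ by $C_{L_i^{(j)}}(x)$ whenever some $x$ still has class size exceeding $M'$---comes with no uniform bound on the number of iterations, and the heuristic of ``tracking the surviving mass of $Y_i\cap L_i^{(j)}$'' does not supply one: passing to a centralizer of index at most $M'$ can shrink both $|L_i^{(j)}|$ and $|Y_i\cap L_i^{(j)}|$ by the same factor, so the relative density of small-class elements need not increase and the process need not terminate in $\epsilon$-bounded time. The remark about ``symmetrizing the choices at each stage'' to recover characteristicness is likewise a placeholder rather than a construction. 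Your closing sentence is an honest admission that the passage from \emph{many} small-class elements to \emph{all} elements having small class on a bounded-index subgroup---which is exactly the content of Neumann's theorem---has been deferred to \cite{neumann1989two} rather than proved. Stages~1 and~3 are correct and standard, but without a completed Stage~2 what you have is an outline of the architecture of Neumann's theorem, not a proof of it.
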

	Finally, we will need the following lemma regarding the commutator map in groups $K$ with $K'\leq Z(K)$.
	\begin{lemma}
		\label{lem:bimultiplicative}
		Let $K$ be a group with $K'\leq Z(K)$.
		Then the commutator map $K\times K\to K'$ is bimultiplicative \emph{(}multiplicative in each component\emph{)}.
	\end{lemma}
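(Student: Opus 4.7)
The plan is to reduce bimultiplicativity to two classical commutator identities that hold in every group, namely
\[[xy,z] = [x,z]^y[y,z] \quad\text{and}\quad [x,yz] = [x,z]\,[x,y]^z,\]
where $a^b := b^{-1}ab$. These are routine to verify by expanding $[a,b] = a^{-1}b^{-1}ab$ on both sides and cancelling; I would just do this once at the start of the proof (it takes a line each).

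Next I would use the hypothesis $K' \leq Z(K)$. The commutators $[x,z]$ and $[x,y]$ both lie in $K'$, hence in $Z(K)$, and so they are fixed by conjugation by any element of $K$. Therefore $[x,z]^y = [x,z]$ and $[x,y]^z = [x,y]$. Substituting into the two identities above yields
\[[xy,z] = [x,z]\,[y,z] \quad\text{and}\quad [x,yz] = [x,y]\,[x,z],\]
which is exactly the statement that the commutator map is multiplicative in each variable.

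There is really no obstacle here; the only thing to watch is getting the conjugation conventions and the two identities written with matching signs. The essential content is a one-line observation once the identities are in hand, and the hypothesis $K' \leq Z(K)$ is used in precisely the form that kills the conjugation action on commutators.
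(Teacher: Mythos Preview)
Your proof is correct and follows essentially the same approach as the paper: both verify multiplicativity in each component by a short commutator computation, with the hypothesis $K'\leq Z(K)$ used to kill the conjugation factors (the paper does the expansion inline rather than quoting the general identities $[xy,z]=[x,z]^y[y,z]$ and $[x,yz]=[x,z][x,y]^z$). Note only that the paper adopts the convention $[k,l]=klk^{-1}l^{-1}$, opposite to yours, and this convention is used explicitly in Section~4.
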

	\begin{proof}
		We will use the convention $[k,l]=klk^{-1}l^{-1}$.
		Then
		\[[k,l_1][k,l_2]=kl_1k^{-1}l_1^{-1}[k,l_2]=kl_1k^{-1}[k,l_2]l_1^{-1}=[k,l_1l_2],\]
		and similarly
		\[[k_1,l][k_2,l]=k_1lk_1^{-1}l^{-1}[k_2,l]=k_1[k_2,l]lk_1^{-1}l^{-1}=[k_1k_2,l].\qedhere\]
	\end{proof}
	\section{An Equidistribution Result}
	In this section, let $(K_i)_{i=1}^\infty$ be a sequence of finite groups with $K_i'\leq Z(K_i)$ whose commutator subgroups $K_i'$ are all isomorphic to each other, and are identified with a fixed group denoted $K'$.
	Note that each subgroup $H\leq K'$ is a normal subgroup of each $K_i$ since $H\leq K_i'\leq Z(K_i)$.
	The center $Z(K_i/H)$ of the quotient $K_i/H$ is given by $Z(K_i/H)=\bar{Z}(K_i/H)/H$, where
	\begin{equation}
	   \label{eq:zbar}
	   \bar{Z}(K_i/H)=\{k\in K_i:[k,l]\in H\text{ for all }l\in K_i\}.
	\end{equation}
	\begin{lemma}
		\label{lem:Zprops}
		Let $H_1,H_2\leq K'$.
		The subgroups $\bar{Z}(K_i/H)\leq K_i$ satisfy the following properties\emph{:}
		\begin{enumerate}
			\item $\bar{Z}(K_i/K')=K_i$.
			\item $\bar{Z}(K_i/(H_1\cap H_2))=\bar{Z}(K_i/H_1)\cap\bar{Z}(K_i/H_2)$.
			\item If $H_1\leq H_2$, then $\bar{Z}(K_i/H_1)\leq\bar{Z}(K_i/H_2)$.
		\end{enumerate}
	\end{lemma}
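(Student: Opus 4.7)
The plan is to verify each of the three properties directly from the defining equation \eqref{eq:zbar}, since they are set-theoretic statements about the predicate ``$[k,l]\in H$ for all $l\in K_i$'' as $H$ varies over subgroups of $K'$. No group-theoretic input beyond the fact that $[K_i,K_i]\leq K'$ is needed, so Lemma \ref{lem:bimultiplicative} will not be required here (it will only matter later in the equidistribution argument).

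First I would handle (1): since $[k,l]\in K_i'\leq K'$ for every $k,l\in K_i$, the containment condition in the definition of $\bar{Z}(K_i/K')$ is trivially satisfied, giving $\bar{Z}(K_i/K')=K_i$. For (2), I would observe that $k\in\bar{Z}(K_i/(H_1\cap H_2))$ iff $[k,l]\in H_1\cap H_2$ for all $l\in K_i$, which by splitting the intersection is equivalent to the conjunction of $[k,l]\in H_1$ for all $l$ and $[k,l]\in H_2$ for all $l$, i.e.\ to membership in $\bar{Z}(K_i/H_1)\cap\bar{Z}(K_i/H_2)$. For (3), the hypothesis $H_1\leq H_2$ immediately upgrades ``$[k,l]\in H_1$ for all $l$'' to ``$[k,l]\in H_2$ for all $l$'', giving the stated inclusion.

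There is essentially no obstacle in this lemma; all three items are unpackings of the definition of $\bar{Z}(K_i/H)$. The only point worth flagging in the write-up is that each $\bar{Z}(K_i/H)$ is genuinely a subgroup of $K_i$ (so that the statements make sense), which follows either from \eqref{eq:zbar} being the preimage of the center $Z(K_i/H)$ under the quotient $K_i\to K_i/H$, or directly from bimultiplicativity of the commutator map modulo $H$. With that observation in place, the three properties are immediate.
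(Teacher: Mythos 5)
Your proof is correct and takes essentially the same approach as the paper, which simply states that the three properties ``follow directly from (\ref{eq:zbar})''; you have just written out the routine verifications explicitly. The added remark that $\bar{Z}(K_i/H)$ is indeed a subgroup (as the preimage of $Z(K_i/H)$ under $K_i\to K_i/H$) is a reasonable point to flag, though the paper treats it as implicit.
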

	\begin{proof}
		These follow directly from (\ref{eq:zbar}).
	\end{proof}
	Before we can state our equidistribution result, we must first construct a specific subgroup $H_0\leq K'$.
	\begin{lemma}
		\label{lem:existsH0}
		There is a smallest subgroup $H_0\leq K'$ with the property that the sequence $([K_i:\bar{Z}(K_i/H_0)])_{i=1}^\infty$ is bounded.
		In other words, for each subgroup $H\leq K'$,
		\begin{align*}
		&\text{the sequence $([K_i:\bar{Z}(K_i/H)])_{i=1}^\infty$ is bounded}&\iff&&H_0\leq H,\\
		&\text{the sequence $([K_i:\bar{Z}(K_i/H)])_{i=1}^\infty$ is unbounded}&\iff&&H_0\not\leq H.
		\end{align*}
	\end{lemma}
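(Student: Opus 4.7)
The plan is to show that the collection
\[\mathcal{H}=\{H\leq K':([K_i:\bar{Z}(K_i/H)])_{i=1}^\infty\text{ is bounded}\}\]
is nonempty and closed under intersection, and then define $H_0$ as the intersection of all members of $\mathcal{H}$. Since each $K_i$ is finite, $K'$ is finite, so $\mathcal{H}$ has finitely many elements and the intersection is a finite one.

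First, $\mathcal{H}$ is nonempty because $K'\in\mathcal{H}$: by property (1) of Lemma \ref{lem:Zprops}, $\bar{Z}(K_i/K')=K_i$, so $[K_i:\bar{Z}(K_i/K')]=1$ for all $i$. Next, I would verify that $\mathcal{H}$ is closed under pairwise (and hence finite) intersection. If $H_1,H_2\in\mathcal{H}$, then property (2) gives
\[\bar{Z}(K_i/(H_1\cap H_2))=\bar{Z}(K_i/H_1)\cap\bar{Z}(K_i/H_2),\]
and the elementary bound $[K_i:A\cap B]\leq[K_i:A][K_i:B]$ implies that $[K_i:\bar{Z}(K_i/(H_1\cap H_2))]$ is bounded by the product of two bounded sequences, hence bounded. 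Thus $H_1\cap H_2\in\mathcal{H}$.

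Defining $H_0=\bigcap_{H\in\mathcal{H}}H$, this is a finite intersection of elements of $\mathcal{H}$, so $H_0\in\mathcal{H}$ by iterating the previous paragraph. By construction, $H_0\leq H$ for every $H\in\mathcal{H}$, so $H_0$ is the smallest element of $\mathcal{H}$. For the stated equivalence, the implication ``$H_0\leq H\Rightarrow$ bounded'' follows from property (3): $H_0\leq H$ gives $\bar{Z}(K_i/H_0)\leq\bar{Z}(K_i/H)$, hence $[K_i:\bar{Z}(K_i/H)]\leq[K_i:\bar{Z}(K_i/H_0)]$, which is bounded because $H_0\in\mathcal{H}$. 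Conversely, if $[K_i:\bar{Z}(K_i/H)]$ is bounded then $H\in\mathcal{H}$ and so $H_0\leq H$. The unboundedness equivalence is the contrapositive.

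There is no real obstacle here; the only subtle point is the reliance on $K'$ being finite so that ``smallest'' makes sense and the intersection defining $H_0$ lies in $\mathcal{H}$. This finiteness is immediate from the hypothesis that each $K_i$ is a finite group, and it is exactly what allows properties (1)--(3) of Lemma \ref{lem:Zprops} to be combined in the straightforward lattice-theoretic way above.
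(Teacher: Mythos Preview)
Your proof is correct and essentially identical to the paper's own proof: the paper also defines the set of subgroups $H\leq K'$ for which the index sequence is bounded, shows it is a filter (nonempty, closed under intersection, upward closed) using the same three properties of Lemma~\ref{lem:Zprops} and the same index inequality $[G:H\cap K]\leq[G:H][G:K]$, and concludes that this filter has a smallest element. Your version just makes the construction of $H_0$ as the (finite) intersection more explicit.
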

	\begin{proof}
		Consider the set
		\[\mathcal F=\{H\leq K':\text{the sequence $([K_i:\bar{Z}(K_i/H)])_{i=1}^\infty$ is bounded}\}.\]
		The lemma will follow from the following properties of $\mathcal F$:
		\begin{enumerate}
			\item $K'\in\mathcal F$,
			\item If $H_1,H_2\in\mathcal F$, then $H_1\cap H_2\in\mathcal F$,
			\item If $H_1\in\mathcal F$ and $H_1\leq H_2\leq K'$, then $H_2\in\mathcal F$.
		\end{enumerate}
		In other words, $\mathcal F$ is a filter in the lattice of subgroups of $K'$.
		The first and third properties of $\mathcal F$ follow from the first and third statements of Lemma \ref{lem:Zprops}.
		For the second property of $\mathcal F$, combining the second statement of Lemma \ref{lem:Zprops} with the inequality $[G:H\cap K]\leq[G:H]\,[G:K]$ gives
		\[[K_i:\bar{Z}(K_i/(H_1\cap H_2))]=[K_i:\bar{Z}(K_i/H_1)\cap\bar{Z}(K_i/H_2)]\leq[K_i:\bar{Z}(K_i/H_1)]\,[K_i:\bar{Z}(K_i/H_2)].\qedhere\]
	\end{proof}
	Before stating our equidistribution result, we first introduce some notation.
	Let $\varphi_i,\psi_i\colon K_i\to K'$ be homomorphisms.
    We will consider the functions $K_i\times K_i\to K'$ given by $(k,\ell)\mapsto\varphi_i(k)[k,\ell]\psi_i(\ell)$.	
	Let $f_i$ denote the function on $K'$ defined by
	\[f_i(a)=\frac{\lvert\{(k,l)\in K_i\times K_i:\varphi_i(k)[k,l]\psi_i(l)=a\}\rvert}{\abs{K_i}^2}.\]
	In other words, $f_i(a)$ is the probability that $\varphi_i(k)[k,\ell]\psi_i(\ell)=a$.
	\begin{lemma}
		\label{lem:equidistribution}
		Assume that for each subgroup $H\leq K'$, the sequence $([K_i:\bar{Z}(K_i/H)])_{i=1}^\infty$ either is bounded or diverges to infinity.
		Then the functions $K_i\times K_i\to K'$ given by $(k,l)\mapsto\varphi_i(k)[k,l]\psi_i(l)$ are equidistributed on the subgroup $H_0$ of Lemma \ref{lem:existsH0}, in the sense that
		\[f_i(a)-\frac{1}{\abs{H_0}}\sum_{b\in H_0}f_i(b)\to0\]
		for each $a\in H_0$.
	\end{lemma}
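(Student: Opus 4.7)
The plan is to use Fourier analysis on $K'$, which is abelian because $K'\leq Z(K_i)$. For each character $\chi\colon K'\to\C^\times$, set $\hat{f_i}(\chi)=\sum_{a\in K'}f_i(a)\chi(a)$. A routine Fourier inversion on $K'$, using that $\chi(a)=1$ for $a\in H_0$ whenever $\chi|_{H_0}\equiv 1$, yields the identity
\[f_i(a)-\f{1}{\abs{H_0}}\sum_{b\in H_0}f_i(b)=\f{1}{\abs{K'}}\sum_{\chi:\,\chi|_{H_0}\neq 1}\hat{f_i}(\chi)\overline{\chi(a)}\]
for every $a\in H_0$. This reduces the lemma to showing $\hat{f_i}(\chi)\to 0$ for each character $\chi$ of $K'$ nontrivial on $H_0$.

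Fix such a $\chi$ and set $H=\ker(\chi|_{K'})$, so $H_0\not\leq H$. Expanding
\[\hat{f_i}(\chi)=\f{1}{\abs{K_i}^2}\sum_{k,l\in K_i}\chi(\varphi_i(k))\,\chi([k,l])\,\chi(\psi_i(l)),\]
the key observation is that Lemma \ref{lem:bimultiplicative} makes $k\mapsto\chi(\varphi_i(k))\chi([k,l])$ a character of $K_i$ for each fixed $l$ (the product of the character $\chi\circ\varphi_i$ with the homomorphism $k\mapsto\chi([k,l])$). By character orthogonality, its average over $k$ is either $0$ or $1$. Using bimultiplicativity once more, the set of $l$ giving the value $1$ is either empty or a single coset of $\bar{Z}(K_i/H)$, as defined by (\ref{eq:zbar}). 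In both cases we obtain
\[\abs{\hat{f_i}(\chi)}\leq\f{1}{[K_i:\bar{Z}(K_i/H)]}.\]

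Finally, because $H_0\not\leq H$, Lemma \ref{lem:existsH0} guarantees that $([K_i:\bar{Z}(K_i/H)])_i$ is unbounded, and then the dichotomy hypothesis of the present lemma promotes ``unbounded'' to ``diverges to infinity'', so $\hat{f_i}(\chi)\to 0$ as required. The main conceptual step is the Fourier reduction combined with the recognition that bimultiplicativity of the commutator upgrades the seemingly quadratic sum $\sum_{k,l}\chi([k,l])$-type expression into an honest character orthogonality problem on $K_i$; once that is in place, the subgroup $\bar{Z}(K_i/H)$ emerges naturally as the relevant ``kernel coset'' and the structural lemmas from Section 3 finish the job.
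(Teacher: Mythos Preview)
Your proof is correct and follows essentially the same approach as the paper: Fourier analysis on $K'$ reduces the claim to $\hat{f_i}(\chi)\to 0$ for characters $\chi$ nontrivial on $H_0$, and bimultiplicativity of the commutator together with character orthogonality bounds $\lvert\hat{f_i}(\chi)\rvert$ by $[K_i:\bar{Z}(K_i/\ker\chi)]^{-1}$. The only cosmetic difference is that the paper performs a change of variables (translating by special elements $k_i,l_i$) to strip off $\varphi_i,\psi_i$ and obtain an exact formula, whereas you argue directly that the ``good'' $l$'s form at most one coset of $\bar{Z}(K_i/H)$ and obtain an inequality, which is all that is needed.
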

	We remark that the assumption in Lemma \ref{lem:equidistribution} can always be satisfied by first passing to a subsequence.
	\begin{proof}
	    Let $\widehat{K'}$ denote the set of homomorphisms $K'\to\mathbb{C}^\times$.
		Fourier analysis on the finite abelian group $K'$ gives the decomposition
		\[f_i=\sum_{\chi\in\widehat{K'}}\langle f_i,\chi\rangle\chi,\qquad\text{where}\qquad\langle f_i,\chi\rangle=\f{1}{\abs{K'}}\sum_{a\in K'}f_i(a)\overline{\chi(a)}.\]
		A standard property of characters is that if $G$ is a finite group, and if $\chi\colon G\to\mathbb{C}^\times$ is a homomorphism, then
		\begin{equation}
		    \label{eq:charsum}
		    \sum_{g\in G}\chi(g)=\begin{cases}\abs{G},&\text{if $\chi(g)=1$ for all $g\in G$},\\0,&\text{otherwise}.\end{cases}
		\end{equation}
		This gives the formula
		\[f_i(a)-\frac{1}{\abs{H_0}}\sum_{b\in H_0}f_i(b)=\sum_{\chi\in\widehat{K'}}\langle f_i,\chi\rangle\pq{\chi(a)-\frac{1}{\abs{H_0}}\sum_{b\in H_0}\chi(b)}=\sum_{\substack{\chi\in\widehat{K'}\\H_0\not\leq\ker\chi}}\langle f_i,\chi\rangle\chi(a),\]
		so it suffices to show that $\langle\chi,f_i\rangle\to0$ for each $\chi\in\widehat{K'}$ with $H_0\not\leq\ker\chi$.
		We can compute
		\[\langle\chi,f_i\rangle=\frac{1}{\abs{K'}}\sum_{a\in K'}\chi(a)f_i(a)=\frac{1}{\abs{K'}}\frac{1}{\abs{K_i}^2}\sum_{k\in K_i}\sum_{l\in K_i}\chi(\varphi_i(k)[k,l]\psi_i(l)).\]
		For each $k\in K_i$, the function $l\mapsto\chi([k,l]\psi_i(l))$ is a homomorphism by Lemma \ref{lem:bimultiplicative}.
		If this homomorphism is nontrivial for each $k\in K_i$, then the inner sum vanishes for each $k\in K_i$ by (\ref{eq:charsum}), and there is nothing to prove.
		Otherwise, let $k_i\in K_i$ be such that $\chi([k_i,l]\psi_i(l))=1$ for all $l\in K_i$.
		Similarly, let $l_i\in K_i$ be such that $\chi(\varphi_i(k)[k,l_i])=1$ for all $k\in K_i$.
		Lemma \ref{lem:bimultiplicative} lets us expand
		\[[kk_i^{-1},ll_i^{-1}]=[k,l_i^{-1}][k,l][k_i^{-1},l][k_i^{-1},l_i^{-1}]=[k,l_i]^{-1}[k,l][k_i,l]^{-1}[k_i,l_i].\]
		Then we can compute
		\begin{align}
		\langle\chi,f_i\rangle&=\frac{1}{\abs{K'}}\frac{1}{\abs{K_i}^2}\sum_{k\in K_i}\sum_{l\in K_i}\chi(\varphi_i(k)[k,l]\psi_i(l))\nonumber\\
		&=\frac{1}{\abs{K'}}\frac{1}{\abs{K_i}^2}\sum_{k\in K_i}\sum_{l\in K_i}\chi([k,l_i]^{-1}[k,l][k_i,l]^{-1})\nonumber\\
		&=\frac{1}{\abs{K'}}\frac{1}{\abs{K_i}^2}\sum_{k\in K_i}\sum_{l\in K_i}\chi([kk_i^{-1},ll_i^{-1}][k_i,l_i]^{-1})\nonumber\\
		&=\frac{1}{\abs{K'}}\frac{1}{\abs{K_i}^2}\chi([k_i,l_i])^{-1}\sum_{k\in K_i}\sum_{l\in K_i}\chi([k,l])\label{eq:inner_product}.
		\end{align}
		For each $k\in K_i$, the homomorphism $l\mapsto\chi([k,l])$ is trivial if and only if $k\in\bar{Z}(K_i/\ker\chi)$.
		Then (\ref{eq:charsum}) gives
		\begin{equation}
		    \label{eq:double_sum}
		    \sum_{k\in K_i}\sum_{l\in K_i}\chi([k,l])=\abs{\bar{Z}(K_i/\ker\chi)}\abs{K_i}=\abs{K_i}^2[K_i:\bar{Z}(K_i/\ker\chi)]^{-1}.
		\end{equation}
		Finally, note that $\abs{\chi([k_i,l_i])}=1$ since $K_i$ is a finite group.
		Then taking absolute values of (\ref{eq:inner_product}) and (\ref{eq:double_sum}) gives
		\[\lvert\langle\chi, f_i\rangle\rvert=\frac{1}{\abs{K'}}[K_i:\bar{Z}(K_i/\ker\chi)]^{-1},\]
		which converges to zero by Lemma \ref{lem:existsH0} since $H_0\not\leq\ker\chi$.
	\end{proof}
	\section{Proof of Main Theorem}
	We will deduce Theorems \ref{thm:main} and \ref{thm:submain} from Theorem \ref{thm:key}.
	\begin{theorem}
	    \label{thm:key}
	    Let $(G_i)_{i=1}^\infty$ be a sequence of finite groups whose commuting probabilities are bounded away from zero.
	    Then there exists a subsequence $(G_{n_i})_{i=1}^\infty$ whose commuting probabilities satisfy at least one of the following two properties\emph{:}
	    \begin{enumerate}
	        \item The commuting probabilities $P(G_{n_i})$ are all equal to each other.
	        \item There exists a finite group $H$ and an integer $k\geq2$ such that $P(G_{n_i})\to\frac{1}{k}P(H)$ and $P(G_{n_i})\geq\frac{1}{k}P(H)$.
	    \end{enumerate}
	\end{theorem}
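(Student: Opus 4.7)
The plan is to apply Theorem \ref{thm:neumann} together with Lemma \ref{lem:equidistribution} to both identify the limit of $P(G_i)$ and control it from below. First I would invoke Theorem \ref{thm:neumann} to obtain $K_i\trianglelefteq G_i$ with $K_i'\leq Z(K_i)$ and $[G_i:K_i],|K_i'|$ bounded, and pass to a subsequence so that $[G_i:K_i]$ and $|K_i'|$ are constant, the isomorphism types of $K':=K_i'$ and $Q:=G_i/K_i$ are constant, the induced action $Q\to\aut(K')$ is a fixed homomorphism, and (using the finiteness of subgroups of $K'$) for each $H\leq K'$ the sequence $[K_i:\bar{Z}(K_i/H)]$ either is bounded or diverges to infinity, which defines $H_0\leq K'$ as in Lemma \ref{lem:existsH0}. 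An $\aut(K_i)$-equivariance argument (using $\sigma(\bar{Z}(K_i/H))=\bar{Z}(K_i/\sigma(H))$) shows $H_0$ is preserved by the $Q$-action, hence $H_0\trianglelefteq G_i$, and Proposition \ref{prop:comm_prob_props}(2) gives $P(G_i)\geq P(G_i/H_0)/|H_0|$.

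The core analytic step is to show that $P(G_i)-P(G_i/H_0)/|H_0|\to 0$. I would decompose $G_i\times G_i$ by cosets in $Q\times Q$: only pairs $(q,r)\in Q\times Q$ with $[q,r]_Q=1$ can contribute, and for fixed lifts $g_q\in G_i$,
\[
P(G_i)=\frac{1}{|Q|^2}\sum_{[q,r]_Q=1}\Pr_{k,l\in K_i}\bigl([g_qk,g_rl]=1\bigr),
\]
with the analogous formula for $P(G_i/H_0)$ (replacing $=1$ by $\in H_0$). For each commuting pair $(q,r)$, using Lemma \ref{lem:bimultiplicative} and $K_i'\leq Z(K_i)$, the map $(k,l)\mapsto[g_qk,g_rl]\in K_i$ is affine modulo $K_i'$---the linear parts being the homomorphisms $k\mapsto[k,g_r]$ and $l\mapsto[g_q,l]$ into $K_i/K_i'$---and on the fiber of the vanishing locus (where the commutator already lies in $K_i'$) it equals a constant times an expression of the form $\varphi_{q,r}(k)[k,l]\psi_{q,r}(l)$ for homomorphisms $\varphi_{q,r},\psi_{q,r}\colon K_i\to K'$. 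Lemma \ref{lem:equidistribution} then gives equidistribution on $H_0$ along this fiber, producing the desired term-by-term convergence.

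Finally, to identify the limit as $\frac{1}{k}P(H)$: the quotient sequence $(G_i/H_0)$ has trivial $H_0$-analog (since $[K_i:\bar{Z}(K_i/H_0)]$ bounded means the center of $K_i/H_0$ has bounded index), and a further subsequence stabilization---making the bounded quotient $(G_i/H_0)/Z(K_i/H_0)$ and the relevant commutator pairings constant---yields $P(G_i/H_0)$ eventually equal to $P(H)$ for a fixed finite group $H$. If $|H_0|\geq 2$ we land in case 2 with $k=|H_0|$. The delicate case is $|H_0|=1$, where the argument above reduces to the tautology $P(G_i)\to P(G_i)$ and $P(G_i)$ can genuinely fail to be eventually constant, as in $P(D_n)=(n+3)/(4n)\to 1/4$; here I would re-run the same coset decomposition using the bounded quotient $G_i/Z(K_i)$, with each contribution controlled by the image of a $Q$-linear map $Z(K_i)\times Z(K_i)\to Z(K_i)$, and pass to a subsequence so that each such image size is bounded (hence eventually constant) or diverges to infinity, reading off the limit from the surviving terms---obtaining case 2 with $k\geq 2$ whenever some contributions vanish, and case 1 otherwise. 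The main obstacle will be precisely this last case: isolating a finite group $H$ and an integer $k\geq 2$ realizing the limit, and verifying the lower bound $P(G_{n_i})\geq\frac{1}{k}P(H)$ from the strict positivity of the dropped terms.
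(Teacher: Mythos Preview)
Your outline has the right ingredients but is missing the paper's central structural device: a reduction to the case where $G_i$ acts trivially on $K_i/K_i'$, carried out by strong induction on $\lvert K'\rvert$. Lemma~\ref{lem:equidistribution} requires genuine homomorphisms $\varphi_i,\psi_i\colon K_i\to K'$, and the maps $k\mapsto k^{h_i}k^{-1}$ only land in $K_i'$ once the conjugation action on $K_i/K_i'$ is trivial. Your workaround---restrict to the ``fiber'' where $[g_qk,g_rl]\in K_i'$ and apply equidistribution there---does not fit the hypotheses of Lemma~\ref{lem:equidistribution}, which is a statement about the full product $K_i\times K_i$; the fiber is only a coset of a subgroup whose index is governed by $\lvert\im\varphi_i^C\rvert$ and $\lvert\im\varphi_i^D\rvert$, and these can diverge. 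The paper deals with exactly this by showing (equation~\eqref{eq:3}) that pairs $(C,D)$ with unbounded $\lvert\im\varphi_i^C\rvert$ contribute $o(1)$, passing to the subgroup $\pi_i^{-1}(Q)$, and then replacing $K_i$ by a bounded-index normal $N_i\leq K_i$ on which the action is trivial modulo the old $K_i'$; if $N_i'<K_i'$ one inducts on $\lvert K'\rvert$, and otherwise $N_i'=K_i'$ forces the action on $N_i/N_i'$ to be genuinely trivial. A second pass of the same induction then forces $H_0=K'$. Without this inductive mechanism your argument has no way to arrive at a situation where Lemma~\ref{lem:equidistribution} applies.

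The same missing reduction undermines your claim that $P(G_i/H_0)$ is eventually constant. Knowing only that $[K_i:\bar Z(K_i/H_0)]$ is bounded does not make $Z(K_i/H_0)$ central in $G_i/H_0$, so the ``bounded quotient $(G_i/H_0)/Z(K_i/H_0)$ plus commutator pairings'' data you propose to stabilize does not determine $P(G_i/H_0)$. In the paper, once the action is trivial and $H_0=K'$, the subgroup $K_i/K_i'$ is central in $G_i/K_i'$, and then the coset formula for $P(G_i/K_i')$ reduces to a finite sum of indicators $\mathbf 1_{K_i'}([h_i,g_i])$ which have been stabilized. Your $D_n$ example is instructive: there $K_i'=1$, so $H_0=1$ automatically, and your scheme gives nothing; the paper instead detects that $\lvert\im\varphi_i^C\rvert\to\infty$ for the nontrivial coset $C$, drops to $Q=\{1\}$, and reads off $P(D_n)\to\tfrac{1}{4}\cdot P(C_n)=\tfrac14$ with $P(D_n)\geq\tfrac14$ directly from \eqref{eq:5}--\eqref{eq:6}. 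The ``delicate case $H_0=1$'' you flag is thus not a residual corner case but the place where the trivial-action reduction is doing essential work you have not replaced.
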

	\begin{proof}[Proof that Theorem \ref{thm:key} implies Theorem \ref{thm:main}]
	In the first case of Theorem \ref{thm:key}, we can set $H_{\text{Thm \ref{thm:main}}}=G_{n_1}$.
	In the second case of Theorem \ref{thm:key}, we can set $H_{\text{Thm \ref{thm:main}}}=H_{\text{Prop \ref{prop:comm_prob_props}.3}}\times H_{\text{Thm \ref{thm:key}}}$, where $H_{\text{Prop \ref{prop:comm_prob_props}.3}}$ is a finite group satisfying $P(H_{\text{Prop \ref{prop:comm_prob_props}.3}})=\frac{1}{k}$ coming from Proposition \ref{prop:comm_prob_props}.3.
	Then Proposition \ref{prop:comm_prob_props}.1 gives
	\[P(H_{\text{Thm \ref{thm:main}}})=P(H_{\text{Prop \ref{prop:comm_prob_props}.3}}\times H_{\text{Thm \ref{thm:key}}})=P(H_{\text{Prop \ref{prop:comm_prob_props}.3}})P(H_{\text{Thm \ref{thm:key}}})=\frac{1}{k}P(H_{\text{Thm \ref{thm:key}}}).\qedhere\]
	\end{proof}
	The remainder of this section will be devoted to proving Theorem \ref{thm:key}.
	By Theorem \ref{thm:neumann}, there exist normal subgroups $K_i\trianglelefteq G_i$ with $K_i'\leq Z(K_i)$ such that the sequences $(\abs{K_i'})_{i=1}^\infty$ and $([G_i:K_i])_{i=1}^\infty$ are bounded.
	By passing to a subsequence, we may assume that the commutator subgroups $K_i'$ are all isomorphic to each other, and that the quotients $G_i/K_i$ are all isomorphic to each other.
	We will identify the commutator subgroups $K_i'$ with a fixed group denoted $K'$, and the quotients $G_i/K_i$ with a fixed group denoted $G/K$.
	Summing over pairs of elements $C,D\in G/K$ gives the formula
	\begin{equation}
	\label{eq:1}
	P(G_i)=\frac{1}{\abs{G/K}^2}\sum_{C\in G/K}\sum_{D\in G/K}\frac{\abs{\{(g,h)\in C_i\times D_i:gh=hg\}}}{\abs{K_i}^2},
	\end{equation}
	where $C_i,D_i\in G_i/K_i$ correspond to $C,D\in G/K$.
	If we fix coset representatives $(g_i,h_i)\in C_i\times D_i$, then we can rewrite the corresponding summand of (\ref{eq:1}) as
	\begin{align}
	\frac{\abs{\{(g,h)\in C_i\times D_i:gh=hg\}}}{\abs{K_i}^2}&=\frac{\abs{\{(k,l)\in K_i\times K_i:(g_ik)(h_il)=(h_il)(g_ik)\}}}{\abs{K_i}^2}\nonumber\\
	&=\frac{\abs{\{(k,l)\in K_i\times K_i:g_ih_i(h_i^{-1}kh_ik^{-1})kl=h_ig_i(g_i^{-1}lg_il^{-1})lk\}}}{\abs{K_i}^2}\nonumber\\
	&=\frac{\lvert\{(k,l)\in K_i\times K_i:\varphi_i^{h_i}(k)[k,l]\varphi_i^{g_i}(l)^{-1}=h_i^{-1}g_i^{-1}h_ig_i\}\rvert}{\abs{K_i}^2},\label{eq:2}
	\end{align}
	where $\varphi_i^{g_i}(l)=g_i^{-1}lg_il^{-1}$, $\varphi_i^{h_i}(k)=h_i^{-1}kh_ik^{-1}$, and $[k,l]=klk^{-1}l^{-1}$.
	We will denote the conjugation action by exponentiation, so that we can write $\varphi_i^{g_i}(l)=l^{g_i}l^{-1}$ and $\varphi_i^{h_i}(k)=k^{h_i}k^{-1}$.
	\subsection{Reduction to Trivial Action}
	The conjugation action of $G_i$ on $K_i$ descends to an action of $G/K$ on $K_i/K_i'$.
	Then for $C,D\in G/K$, we obtain endomorphisms $\varphi_i^C,\varphi_i^D\in\End(K_i/K_i')$ defined by $\varphi_i^C(l)=l^Cl^{-1}$ and $\varphi_i^D(k)=k^Dk^{-1}$.
	If there exists a pair $(g_i,h_i)\in C_i\times D_i$ with $g_ih_i=h_ig_i$, then (\ref{eq:2}) gives the estimate
	\begin{equation}
	\label{eq:3}
	\frac{\abs{\{(g,h)\in C_i\times D_i:gh=hg\}}}{\abs{K_i}^2}\leq\frac{\abs{\{(k,l)\in K_i/K_i'\times K_i/K_i':\varphi_i^D(k)=\varphi_i^C(l)\}}}{\abs{K_i/K_i'}^2}=\frac{\lvert\im\varphi_i^C\cap\im\varphi_i^D\rvert}{\lvert\im\varphi_i^C\rvert\,\lvert\im\varphi_i^D\rvert}.
	\end{equation}
	If no such pair $(g_i,h_i)\in C_i\times D_i$ exists, then (\ref{eq:3}) is trivially true.
	
	By passing to a subsequence, we may assume that for each $C\in G/K$, the sequence $(\lvert\im\varphi_i^C\rvert)_{i=1}^\infty$ either is bounded or diverges to infinity.
	Now consider the set
	\[Q=\{C\in G/K:\text{the sequence $(\lvert\im\varphi_i^C\rvert)_{i=1}^\infty$ is bounded}\}\subseteq G/K.\]
	The computation $\varphi_i^1(k)=k^1k^{-1}=1$ shows that $1\in Q$.
	The identity $\varphi_i^{CD}(k)=\varphi_i^D(k^C)\varphi_i^C(k)$ gives the inequality $\lvert\im\varphi_i^{CD}\rvert\leq\lvert\im\varphi_i^C\rvert\,\lvert\im\varphi_i^D\rvert$, which shows that $Q$ is closed under multiplication.
	Thus, $Q$ is a subgroup of $G/K$.
	If we let $\pi_i\colon G_i\to G/K$ denote the quotient map, then we can split the sum in (\ref{eq:1}) as
	\begin{equation}
	    \label{eq:5}
	    P(G_i)=\frac{1}{[G/K:Q]^2}P(\pi_i^{-1}(Q))+\frac{1}{\abs{G/K}^2}\sum_{\substack{(C,D)\in(G/K)^2\\C\notin Q\text{ or }D\notin Q}}\frac{\abs{\{(g,h)\in C_i\times D_i:gh=hg\}}}{\abs{K_i}^2}.
	\end{equation}
	Now observe that (\ref{eq:3}) gives the bound
	\begin{equation}
	    \label{eq:6}
	    0\leq\sum_{\substack{(C,D)\in(G/K)^2\\C\notin Q\text{ or }D\notin Q}}\frac{\abs{\{(g,h)\in C_i\times D_i:gh=hg\}}}{\abs{K_i}^2}\leq\sum_{\substack{(C,D)\in(G/K)^2\\C\notin Q\text{ or }D\notin Q}}\frac{\lvert\im\varphi_i^C\cap\im\varphi_i^D\rvert}{\lvert\im\varphi_i^C\rvert\,\lvert\im\varphi_i^D\rvert}\to0\text{ as }i\to\infty.
	\end{equation}
	\begin{lemma}
	If Theorem \ref{thm:key} is true for the sequence $(\pi_i^{-1}(Q))_{i=1}^\infty$, then Theorem \ref{thm:key} is also true for the sequence $(G_i)_{i=1}^\infty$.
	\end{lemma}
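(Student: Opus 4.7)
The plan is to combine equations (\ref{eq:5}) and (\ref{eq:6}) into the single identity
\[P(G_i)=\frac{1}{m}P(\pi_i^{-1}(Q))+\epsilon_i,\qquad m=[G/K:Q]^2,\]
where the error term $\epsilon_i$ is nonnegative by (\ref{eq:6}) and satisfies $\epsilon_i\to0$, also by (\ref{eq:6}). The nonnegativity is the key structural feature: it lets us pass the ``$\geq\ell$'' inequality from the sequence $(\pi_i^{-1}(Q))_{i=1}^\infty$ to the sequence $(G_i)_{i=1}^\infty$, while the convergence $\epsilon_i\to0$ lets us pass limits.

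Assuming Theorem \ref{thm:key} holds for $(\pi_i^{-1}(Q))_{i=1}^\infty$, I would fix a subsequence (still reindexed as $n_i$) falling into one of the two cases of Theorem \ref{thm:key}. In the second case, there exist $H$ and $k\geq2$ with $P(\pi_{n_i}^{-1}(Q))\to\tfrac1kP(H)$ and $P(\pi_{n_i}^{-1}(Q))\geq\tfrac1kP(H)$; dividing by $m$ and adding $\epsilon_{n_i}\geq0$ gives
\[P(G_{n_i})\to\frac{1}{mk}P(H)\qquad\text{and}\qquad P(G_{n_i})\geq\frac{1}{mk}P(H),\]
so $(G_i)_{i=1}^\infty$ lands in case 2 of Theorem \ref{thm:key} with the same $H$ and with $mk\geq2$ in place of $k$.

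In the first case, all $P(\pi_{n_i}^{-1}(Q))$ equal some common value $c$, and I would split on whether $Q=G/K$ or not. If $Q=G/K$, then the outer sum in (\ref{eq:5}) is empty, so $\epsilon_i=0$ identically and $P(G_{n_i})=c$ for every $i$, landing in case 1 of Theorem \ref{thm:key}. If $Q\subsetneq G/K$, then $m=[G/K:Q]^2\geq4\geq2$, and taking $H=\pi_{n_1}^{-1}(Q)$ (so $P(H)=c$) with the integer $m$ playing the role of $k$, the identity $P(G_{n_i})=\tfrac{c}{m}+\epsilon_{n_i}$ together with $\epsilon_{n_i}\geq0$ and $\epsilon_{n_i}\to0$ places $(G_i)_{i=1}^\infty$ in case 2 of Theorem \ref{thm:key}.

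There is no real obstacle here; the lemma is essentially a bookkeeping reduction, and its only substance is the observation that the ``off-diagonal'' contribution in (\ref{eq:5}) is nonnegative and tends to zero (which is precisely what (\ref{eq:6}) records). The mild care needed is just in the case split on whether $Q$ is proper in $G/K$, to ensure that the integer produced in case 2 of Theorem \ref{thm:key} is at least $2$.
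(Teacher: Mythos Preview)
Your proof is correct and follows essentially the same approach as the paper: both arguments use (\ref{eq:5}) and (\ref{eq:6}) to write $P(G_i)=\tfrac{1}{[G/K:Q]^2}P(\pi_i^{-1}(Q))+\epsilon_i$ with $\epsilon_i\geq0$ and $\epsilon_i\to0$, and then transfer the conclusion of Theorem \ref{thm:key} along this identity. The only cosmetic difference is in the case organization: the paper first disposes of $Q=G/K$ as tautological and then treats both cases of Theorem \ref{thm:key} uniformly by allowing $k\geq1$ (so that case~1 becomes $k=1$, $H=\pi_{n_1}^{-1}(Q)$), whereas you split first on the two cases of Theorem \ref{thm:key} and then, within case~1, on whether $Q$ is proper.
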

	\begin{proof}
	If $[G/K:Q]=1$, then $\pi_i^{-1}(Q)=G_i$ and the lemma is tautological.
	Now suppose that $[G/K:Q]\geq2$.
	By Theorem \ref{thm:key} for the sequence $(\pi_i^{-1}(Q))_{i=1}^\infty$, there exists a subsequence $(\pi_{n_i}^{-1}(Q))_{i=1}^\infty$, a finite group $H$, and an integer $k\geq1$ such that $P(\pi_{n_i}^{-1}(Q))\to\f{1}{k}P(H)$ and $P(\pi_{n_i}^{-1}(Q))\geq\f{1}{k}P(H)$.
	By (\ref{eq:5}) and (\ref{eq:6}), we have $P(G_{n_i})\to\f{1}{k[G/K:Q]^2}P(H)$ and $P(G_{n_i})\geq\f{1}{k[G/K:Q]^2}P(H)$.
	Since $[G/K:Q]\geq2$, we are in the second case of Theorem \ref{thm:key} for the sequence $(G_i)_{i=1}^\infty$.
	\end{proof}
	By replacing $G_i$ with $\pi_i^{-1}(Q)$, we may assume that for each $C\in G/K$, the sequence $(\lvert\im\varphi_i^C\rvert)_{i=1}^\infty$ is bounded.
	Then the inequality
	\[\big[K_i/K_i':\bigcap_{C\in G/K}\ker\varphi_i^C\big]\leq\prod_{C\in G/K}[K_i/K_i':\ker\varphi_i^C]=\prod_{C\in G/K}\lvert\im\varphi_i^C\rvert\]
	shows that the subgroups $\bigcap\ker\varphi_i^C\leq K_i/K_i'$ have bounded index in $K_i/K_i'$.
	If we write $\bigcap\ker\varphi_i^C=L_i/K_i'$, then the subgroups $L_i\leq K_i$ have bounded index in $G_i$.
	If we set $N_i=\bigcap_{g\in G_i}L_i^g$ (i.e., the normal core of $L_i$ in $G_i$), then the subgroups $N_i\leq K_i$ have bounded index in $G_i$ and are normal in $G_i$.\footnote{Actually, $L_i$ is already a normal subgroup of $G_i$, but it is easier to just pass to the normal core anyway.}
	By passing to a subsequence, we may identify the commutator subgroups $N_i'\leq K_i'$ with a fixed subgroup $N'\leq K'$, and the quotients $G_i/N_i$ with a fixed group denoted $G/N$.
	This is the same setup as we had at the start of the proof.
	If $N'<K'$, then we are done by strong induction on $\abs{K'}$.
	Otherwise, replacing $K_i$ with $N_i$ allows us to assume that $G_i$ acts trivially on $K_i/K_i'$.
	\subsection{Applying Equidistribution}
	By passing to a subsequence, we may assume (as required for Lemma \ref{lem:equidistribution}) that for each subgroup $H\leq K'$, the sequence $([K_i:\bar{Z}(K_i/H)])_{i=1}^\infty$ either is bounded or diverges to infinity.
	Let $H_0\leq K'$ be the subgroup whose existence is assured by Lemma \ref{lem:existsH0}.
	Then the subgroups $L_i=\bar{Z}(K_i/H_0)\leq K_i$ have bounded index in $G_i$.
	If we set $N_i=\bigcap_{g\in G_i}L_i^g$ (i.e., the normal core of $L_i$ in $G_i$), then the subgroups $N_i\leq K_i$ have bounded index in $G_i$ and are normal in $G_i$.
	Also, $N_i'\leq H_0$ since $N_i\leq\bar{Z}(K_i/H_0)$.
	By passing to a subsequence, we may identify the commutator subgroups $N_i'\leq H_0\leq K'$ with a fixed subgroup $N'\leq H_0\leq K'$, and the quotients $G_i/N_i$ with a fixed group denoted $G/N$.
	This is the same setup as we had at the start of the proof.
	If $N'<K'$, then we are done by strong induction on $\abs{K'}$.
	Otherwise, we have $H_0=K'$.
	
	Returning to (\ref{eq:2}), note that $\varphi_i^{h_i}(k)=k^{h_i}k^{-1}\in K_i'\leq Z(K_i)$ since $G_i$ acts trivially on $K_i/K_i'$.
	Then
	\[\varphi_i^{h_i}(k_1k_2)=(k_1k_2)^{h_i}(k_1k_2)^{-1}=k_1^{h_i}k_2^{h_i}k_2^{-1}k_1^{-1}=k_1^{h_i}\varphi_i^{h_i}(k_2)k_1^{-1}=k_1^{h_i}k_1^{-1}\varphi_i^{h_i}(k_2)=\varphi_i^{h_i}(k_1)\varphi_i^{h_i}(k_2),\]
	which shows that the functions $\varphi_i^{g_i},\varphi_i^{h_i}\colon K_i\to K'$ are homomorphisms.
	Now we can invoke Lemma \ref{lem:equidistribution} to conclude that the functions $K_i\times K_i\to K'$ given by $(k,l)\mapsto\varphi_i^{h_i}(k)[k,l]\varphi_i^{g_i}(l)^{-1}$ are equidistributed on $K'$ (regardless of the choices of coset representatives $(g_i,h_i)\in C_i\times D_i$), in the sense that
	\begin{equation}
	\label{eq:4}
	\frac{\lvert\{(k,l)\in K_i\times K_i:\varphi_i^{h_i}(k)[k,l]\varphi_i^{g_i}(l)^{-1}=a\}\rvert}{\abs{K_i}^2}\to\frac{1}{\abs{K'}}
	\end{equation}
	for each $a\in K'$.
	
	By passing to a subsequence, we may assume that for each pair of elements $C,D\in G/K$, the chosen coset representatives $(g_i,h_i)\in C_i\times D_i$ either satisfy $h_i^{-1}g_i^{-1}h_ig_i\in K_i'$ for all $i$ or satisfy $h_i^{-1}g_i^{-1}h_ig_i\notin K_i'$ for all $i$.
	Then (\ref{eq:2}) and (\ref{eq:4}) show that each summand of (\ref{eq:1}) either converges to $\frac{1}{\lvert K'\rvert}$ (if $h_i^{-1}g_i^{-1}h_ig_i\in K_i'$ for all $i$) or is identically zero (if $h_i^{-1}g_i^{-1}h_ig_i\notin K_i'$ for all $i$).
	If we write $\mathbf{1}_{K_i'}$ for the indicator function of $K_i'$, then we have shown that
	\[P(G_i)\to\frac{1}{\abs{K'}}\frac{1}{\abs{G/K}^2}\sum_{C\in G/K}\sum_{D\in G/K}\mathbf{1}_{K_i'}(h_i^{-1}g_i^{-1}h_ig_i),\]
	where the limiting value does not depend on $i$.
	Now compare this with the formula
	\[P(G_i/K_i')=\frac{1}{\abs{G/K}^2}\sum_{C\in G/K}\sum_{D\in G/K}\mathbf{1}_{K_i'}(h_i^{-1}g_i^{-1}h_ig_i),\]
obtained by applying (\ref{eq:1}) and (\ref{eq:2}) to the group $G_i/K_i'$ with central subgroup $K_i/K_i'\trianglelefteq G_i/K_i'$.
	In particular, the commuting probability $P(G_i/K_i')$ does not depend on $i$, and we have
	\[P(G_i)\to\frac{1}{\abs{K'}}P(G_i/K_i').\]
	Furthermore, Proposition \ref{prop:comm_prob_props}.2 gives
	\[P(G_i)\geq\frac{1}{\abs{K'}}P(G_i/K_i').\]
	If $\abs{K'}>1$, then we are in the second case of Theorem \ref{thm:key}.
	If $\abs{K'}=1$, then the commuting probability $P(G_i)$ does not depend on $i$ and we are in the first case of Theorem \ref{thm:key}.
	\section{Order Type}
	Following Section 5 of \cite{eberhard2015commuting}, we will determine the order type of $S$ from the iterated derived sets of $S\cup\{0\}$.
	For a closed subset $X\subseteq\R$, the subset $X'\subseteq X$ of limit points of $X$ (equivalently, non-isolated points of $X$) is called the derived set of $X$.
	The iterated derived sets $X^\alpha$ for ordinals $\alpha$ are defined by
	\begin{align*}
	    X^0&=X,\\
	    X^{\alpha+1}&=(X^\alpha)',\\
	    X^\alpha&=\bigcap_{\beta<\alpha}X^\beta\ \ \text{if $\alpha$ is a limit ordinal.}
	\end{align*}
	The following proposition summarizes the discussion and results of Section 5 of \cite{eberhard2015commuting}.
	\begin{proposition}[Section 5 of \cite{eberhard2015commuting}]
	    \label{prop:cantorbendixson}
	    Let $X=S\cup\{0\}$.
	    The order type of $S$ \emph{(}with respect to the opposite ordering\emph{)} is $\omega^\alpha$, where $\alpha$ is the unique ordinal for which $X^\alpha=\{0\}$.
	    Moreover, either $\alpha=\omega$ or $\alpha=\omega^2$.
	\end{proposition}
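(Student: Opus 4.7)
The plan is to apply Cantor--Bendixson theory to $X = S\cup\{0\}$. By Theorem \ref{thm:main} the set $X$ is closed, and since $S\subseteq\mathbb{Q}$ it is countable. Hence the iterated derived sets $X^\beta$ strictly decrease until they stabilize at a perfect subset, which for a countable compact space must be empty, and this occurs at some countable ordinal. Joseph's second conjecture (also a consequence of Theorem \ref{thm:main}, applied to the reverse of any decreasing sequence in $S$) says that every point of $S$ is isolated from below in $X$, so $0$ is the only element of $X$ that can be a two-sided limit point. A transfinite induction then shows $0\in X^\beta$ for every $\beta\le\alpha$ while no other point survives, so there is a unique ordinal $\alpha$ with $X^\alpha=\{0\}$ and $X^{\alpha+1}=\emptyset$.

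For the order-type claim, I would proceed by transfinite induction on $\alpha$. The base case $\alpha=0$ is trivial. In the successor step, the points of $X^\beta\setminus X^{\beta+1}$ are exactly those isolated in $X^\beta$; between two consecutive such points lies a clopen piece of $X$ whose rank-$\beta$ derived set is a single point, so by the inductive hypothesis this piece has order type $\omega^\beta$ (opposite ordering). Concatenating these $\omega$ many pieces in descending order and adjoining the next limit yields order type $\omega^{\beta+1}$. The limit-ordinal case follows by continuity of ordinal exponentiation and the definition $X^\alpha=\bigcap_{\beta<\alpha}X^\beta$.

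For the dichotomy $\alpha\in\{\omega,\omega^2\}$, I would appeal to the analysis of Section 5 of \cite{eberhard2015commuting}: the lower bound $\alpha\geq\omega$ is obtained by exhibiting, for every $n<\omega$, a point of Cantor--Bendixson rank $n$ using explicit families of finite groups (for instance, iterated dihedral products built from Proposition \ref{prop:comm_prob_props}.3), while the upper bound $\alpha\leq\omega^2$ and the exclusion of intermediate ordinals follow from Eberhard's Egyptian-fraction description of elements of $S$ bounded away from $0$, which forces accumulation behavior to fit into a two-level hierarchy indexed by $\omega\cdot n$ on one level and by $\omega^2$ at the top.

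The hard part is the exclusion step: merely bounding $\alpha$ above and below is much easier than showing that no ordinal in the open interval $(\omega,\omega^2)$ is realized. Ruling these out requires a fine analysis of how Neumann's theorem (Theorem \ref{thm:neumann}) parameterizes groups with commuting probability near a given limit $\ell$, and of how these parameters can in turn limit to give deeper accumulation; getting this right is what pins $\alpha$ down to one of two values rather than a dense set of possibilities.
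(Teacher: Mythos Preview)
The paper does not supply its own proof of this proposition: it is stated as a summary of Section~5 of \cite{eberhard2015commuting} and then \emph{used}, together with Theorem~\ref{thm:key}, to compute $\alpha=\omega$ in Theorem~\ref{lem:derived}. So there is no in-paper argument to compare against; your proposal is an attempted reconstruction of Eberhard's reasoning.

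That reconstruction has a genuine gap in the first paragraph. The observation that every point of $S$ is isolated from below, and hence that ``$0$ is the only element of $X$ that can be a two-sided limit point'', is true but does no work: a one-sided limit point is still a limit point for Cantor--Bendixson purposes, and indeed $0$ itself, being the minimum of $X$, is approached only from above. What must be shown is that $0$ has strictly greater Cantor--Bendixson rank than every positive element of $X$, i.e.\ that whenever $X^\beta$ contains some $x>0$ one has $0\in X^{\beta+1}$; the two-sided remark does not address this, and the promised ``transfinite induction'' is never carried out. A clean fix uses the multiplicative structure recorded in Proposition~\ref{prop:comm_prob_props}: since $S$ is closed under products and contains $1/k$ for every $k\geq1$, one checks by transfinite induction that each $X^\beta$ is stable under $x\mapsto x/k$, so any positive element of $X^\beta$ drags a sequence in $X^\beta$ down to $0$, forcing $0\in X^{\beta+1}$. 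This is exactly what makes the last nonempty derived set equal to $\{0\}$ rather than a larger finite set, and hence what makes the order type $\omega^\alpha$ rather than $\omega^\alpha\cdot n$.

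Your second paragraph is a reasonable sketch of the standard Mazurkiewicz--Sierpi\'nski correspondence between Cantor--Bendixson rank and the order type of a countable compact ordinal. In the third paragraph your attribution of the dichotomy to Eberhard is correct, but the gloss is slightly off: the exclusion of ordinals strictly between $\omega$ and $\omega^2$ in \cite{eberhard2015commuting} goes through Eberhard's Egyptian-fraction structure theorem for $S$, not through a further direct analysis of accumulation via Theorem~\ref{thm:neumann} as you suggest.
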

	We will use Theorem \ref{thm:key} to compute the iterated derived sets of $X=S\cup\{0\}$.
	We will write $\f1kX$ to denote the pointwise rescaling of $X$ by a factor of $\f1k$.
	\begin{theorem}
	\label{lem:derived}
	Let $X=S\cup\{0\}$.
	\begin{enumerate}
	    \item The derived set $X'$ is
	    \[X'=\bigcup_{k=2}^\infty\f1kX=\bigcup_p\frac{1}{p}X,\]
	    where $p$ runs over all prime numbers.
	    \item The iterated derived sets $X^n$ for nonnegative integers $n$ are
	    \[X^n=\bigcup_{\Omega(k)\,\geq\,n}\f1kX=\bigcup_{\Omega(k)\,=\,n}\frac{1}{k}X,\]
	    where $\Omega(p_1^{a_1}\cdots p_j^{a_j})=a_1+\cdots+a_j$ denotes the prime omega function.
	    \item The iterated derived set $X^\omega$ is $X^\omega=\{0\}$.
	    \item The order type of $S$ \emph{(}with respect to the opposite ordering\emph{)} is $\omega^\omega$.
	\end{enumerate}
	\end{theorem}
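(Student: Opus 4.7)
The plan is to prove the four statements in order. The bulk of the work is in Part 1; Parts 2, 3, and 4 then follow by induction, an intersection argument, and an appeal to Proposition \ref{prop:cantorbendixson}.

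For Part 1, I would establish both inclusions of the first equality. The containment $X'\subseteq\bigcup_{k=2}^\infty\f1kX$ is a direct consequence of Theorem \ref{thm:key}: a non-constant sequence in $X$ witnessing a positive limit point is bounded away from zero and must fall into Case 2 of Theorem \ref{thm:key} (Case 1 would force a constant subsequence), yielding a limit of the form $\f{P(H)}{k}$ with $k\geq2$; limits equal to $0$ lie trivially in $\f12X$. For the reverse containment, given $y=P(G)\in X$ and $k\geq2$, I must exhibit a sequence in $X$ converging to $\f yk$ but not equal to it. The key input is a sequence $(E_i)_{i=1}^\infty$ of finite groups with $P(E_i)\to\f1k$ from above: writing $k=\prod_pp^{a_p}$, take $E_i$ to be a product of $a_p$ extraspecial $p$-groups of order $p^{2i+1}$ for each prime $p\mid k$, so that $P(E_i)=\prod_p\pq{\f1p+\f{p-1}{p^{2i+1}}}^{a_p}\to\f1k$ from above. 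Then $P(G\times E_i)=P(G)P(E_i)\to\f yk$ from above, witnessing $\f yk\in X'$. The second equality $\bigcup_{k=2}^\infty\f1kX=\bigcup_p\f1pX$ follows because $X$ is closed under multiplication by $\f1n$ (via Propositions \ref{prop:comm_prob_props}.1 and \ref{prop:comm_prob_props}.3), so $\f1kX\subseteq\f1pX$ whenever $p\mid k$.

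For Part 2 I would induct on $n$, with the same closure observation handling the equality of the two unions and the base case $n=0$ being immediate. For the inductive step, the $\supseteq$ direction mirrors Part 1: given $\f ym$ with $\Omega(m)=n+1$, write $m=pk$ with $p$ prime and $\Omega(k)=n$; then $\f{y\cdot P(E_i)}{k}\in\f1kX\subseteq X^n$ by induction and converges to $\f ym$ using the $(E_i)$ above with $P(E_i)\to\f1p$. The $\subseteq$ direction takes $\ell\in X^{n+1}=(X^n)'$ with a sequence $x_i=y_i/k_i\in X^n\setminus\{\ell\}$ converging to $\ell$ (where $\Omega(k_i)=n$ by the inductive hypothesis). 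If $\{k_i\}$ is bounded, one may pass to a subsequence with $k_i=k$ constant, so $y_i\to k\ell\in X'$ and Part 1 yields $k\ell=z/p$ with $z\in X$ and $p$ prime, giving $\ell=z/(kp)$ with $\Omega(kp)=n+1$. If $k_i\to\infty$, the bound $x_i\leq1/k_i$ forces $\ell=0\in\f1mX$ for any $m$.

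For Part 3, any $\ell\in X^\omega$ lies in $\f1kX$ for $k$ with $\Omega(k)$ arbitrarily large, and since $k\geq2^{\Omega(k)}$ and $\ell=y/k\leq1/k$ for $y\in X\subseteq(0,1]$, this forces $\ell=0$; the reverse containment $0\in X^\omega$ is clear, so $X^\omega=\{0\}$. Part 4 then follows from Proposition \ref{prop:cantorbendixson}, which narrows the order type to $\omega^\omega$ or $\omega^{\omega^2}$: since $X^\omega=\{0\}$, the defining ordinal $\alpha$ equals $\omega$, so the order type is $\omega^\omega$. The main obstacle is the $\supseteq$ direction of Part 1: constructing, for each $k\geq2$, a sequence of groups with commuting probabilities converging to $\f1k$ from above. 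Joseph's classical sequence $\f12(1+2^{-2n})$ (from extraspecial $2$-groups) handles $k=2$, and the plan above extends this to arbitrary $k$ via products of extraspecial $p$-groups across the primes dividing $k$.
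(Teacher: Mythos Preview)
Your proposal is correct and follows essentially the same approach as the paper: Theorem~\ref{thm:key} for the $\subseteq$ direction of Part~1, extraspecial $p$-groups plus multiplicative closure for the $\supseteq$ direction, induction for Part~2, the bound $X^n\subseteq[0,2^{-n}]$ for Part~3, and Proposition~\ref{prop:cantorbendixson} for Part~4. The only cosmetic difference is in the inductive step of Part~2, where the paper commutes the derived-set operation with the (locally finite) union in a single chain of equalities, whereas you argue the two inclusions separately via a bounded/unbounded case split on the denominators $k_i$; the underlying content is the same.
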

	\begin{proof}
	    We will prove the four statements sequentially.
        \begin{enumerate}
	        \item Let $x\in X'$ be a nonzero limit point of $X$.
            Then we are in the second case of Theorem \ref{thm:key}, so there exists a finite group $H$ and an integer $k\geq2$ such that $x=\f1kP(H)\in\f1kX$.
            This proves the containment
            \[X'\subseteq\bigcup_{k=2}^\infty\frac{1}{k}X.\]
            The next containment
            \[\bigcup_{k=2}^\infty\frac{1}{k}X\subseteq\bigcup_p\frac{1}{p}X\]
            follows from Proposition \ref{prop:comm_prob_props} which tells us that $X$ is closed under multiplication and contains $\frac{1}{k}$ for each positive integer $k$.
            The extraspecial $p$-groups show that $\frac{1}{p}\in X'$, so the final containment
            \[\bigcup_p\frac{1}{p}X\subseteq X'\]
            follows from the fact that $X$ is closed under multiplication.
            \item We will proceed by induction on $n$.
            The base case of $n=0$ states that
            \[X=\bigcup_{k=1}^\infty\f1kX=X,\]
            which follows from the fact that $X$ is closed under multiplication and contains $\frac{1}{k}$ for each positive integer $k$.
            Now inductively assume that the second statement is true for some nonnegative integer $n$.
            The inductive assumption and the first statement of the lemma let us calculate
            \[X^{(n+1)}=\left(\bigcup_{\Omega(k)\,\geq\,n}\frac{1}{k}X\right)'=\bigcup_{\Omega(k)\,\geq\,n}\frac{1}{k}X'=\bigcup_{\Omega(k)\,\geq\,n}\frac{1}{k}\bigcup_p\frac{1}{p}X=\bigcup_{\Omega(k)\,\geq\,n+1}\frac{1}{k}X,\]
            and similarly with each ``$\geq$'' replaced by ``$=$''.
            The second equality uses the observation that any limit point of $X^{n+1}$ will lie above all but finitely many of the sets $\frac{1}{k}X$ and so must be a limit point of one of the finitely many remaining sets $\frac{1}{k}X$.
            \item By the second statement of the lemma, the iterated derived sets $X^n$ for nonnegative integers $n$ satisfy $\{0\}\subseteq X^n\subseteq[0,2^{-n}]$.
            Then the iterated derived set $X^\omega=\bigcap_nX^n$ satisfies
            \[\{0\}\subseteq X^\omega\subseteq\bigcap_n{[0,2^{-n}]}=\{0\}.\]
            \item The third statement of the lemma tells us that the ordinal $\alpha$ in Proposition \ref{prop:cantorbendixson} is $\omega$, so the order type of $S$ (with respect to the opposite ordering) is $\omega^\omega$.\qedhere
        \end{enumerate}
    \end{proof}
	\section*{Acknowledgements}
	Many thanks to Sean Eberhard for pointing out to me that the order type of $S$ can be determined by the refinement of Theorem \ref{thm:main} that is now stated in Theorem \ref{thm:key}.
	This work was partially supported by NSF grant DMS-1646385 (RTG grant).
	\bibliographystyle{plain}
	\bibliography{refs}
\end{document}